\newtheorem{theorem}{Theorem}
\theoremstyle{plain}
\newtheorem{claim}{Claim}
\newtheorem{corollary}{Corollary}
\newtheorem{definition}{Definition}
\newtheorem{lemma}{Lemma}
\newtheorem{remark}{Remark}
\numberwithin{equation}{section}
\begin{document}
\title[]{On the attractor zero of a sequence of polynomials}
\author{hac\`{e}ne belbachir}
\address{USTHB, Faculty of Mathematics Po. Box 32, El Alia, 16111}
\email{hbelbachir@usthb.dz}
\thanks{Thanks for Author One.}
\author{nouar degaichi}
\curraddr{Department of mathematics, Faculty of Exact Sciences, Tebessa
Uiversity, Tebessa 12002, algeria}
\email{ndegaichi@usthb.dz}
\thanks{Thanks for Author Two.}
\date{, 2017}
\subjclass[2000]{Primary 05C38, 15A15; Secondary 05A15, 15A18}
\keywords{Linear recurence, attractor zero, comformal mapping.}
\dedicatory{}
\thanks{This paper is in final form and no version of it will be submitted
for publication elsewhere.}

\begin{abstract}
The main perpose of this paper is to sudy the roots of a familly of
polynomials that arise from a linear recurrences associated to Pascal's
triangle and their zero attractor, using an analytical methods based on
conformal mappings.
\end{abstract}

\maketitle

\section{ Introduction}

Fibonnaci numbers can be recovered as the sum of the main rays of Pascal's
triangle, that is, each element of Fibonacci sequence $\left( F_{n}\right)
_{n}$ is the sum of binomial coefficients $\binom{n-k}{k}:$ $%
F_{n+1}=\sum\limits_{k}\binom{n-k}{k}$. In this context, one way to extend
the work of Goh et al (cf.\cite{GHR}) is to generalize to the linear
recurrence sequence $\left( T_{n}\right) _{n}$, associated to different
directions of the rays in Pascal's triangle, defined for $n,p,q,r \in 
\mathbb{Z}$  with $n\geq 0,r\geq 1,0\leq p\leq r-1$ and $q+r>0,$ by:

\begin{equation}
T_{n+1}^{(r,q,p)}=\sum\limits_{k=0}^{\left\lfloor \left( n-p\right) /\left(
q+r\right) \right\rfloor }T^{(r,q,p)}\left( n,k\right) =\sum\limits_{k\geq 0}%
\binom{n-qk}{p+rk}x^{n-p-(q+r)k}y^{p+rk},  \tag{1.1}  \label{1.1}
\end{equation}

with the convention $T_{0}=0$.

(Notice that because a sum over empty set is zero)

$(T_{0}=0)T_{1}=$\textperiodcentered \textperiodcentered \textperiodcentered 
$=T_{p}=0$,

and

\begin{equation}
T_{j}=\binom{j-1}{p}x^{j-p-1}y^{p},\ \ \ \ \ \ p+1\leq j\leq r+q+p-1, 
\tag{1.2}  \label{1.2}
\end{equation}

wich is studied by Belbachir et al (cf. \cite{BKS})

$%
\begin{array}{cccccc}
1 &  &  &  &  &  \\ 
1 & 1 &  &  &  &  \\ 
1 & 2 & 1 &  &  &  \\ 
1 & 3 & 3 & 1 &  &  \\ 
1 & 4 & 6 & 4 & 1 &  \\ 
1 & 5 & 10 & 10 & 5 & 1%
\end{array}%
$

\begin{theorem}
The sequence defined in (\ref{1.2}) satisfy the linear recurrence relation:
\end{theorem}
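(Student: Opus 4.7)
The statement is truncated in the excerpt, but in the context of Fibonacci-type sums over the rays of Pascal's triangle, the only natural recurrence that the family $\{T_{n}^{(r,q,p)}\}$ can be expected to satisfy is
\[
\sum_{i=0}^{r}(-1)^{i}\binom{r}{i}x^{i}\,T_{n+r+q-i}^{(r,q,p)}=y^{r}\,T_{n}^{(r,q,p)},
\]
equivalently $(E-x)^{r}E^{q}T_{n}=y^{r}T_{n}$ in terms of the forward shift $E$, which degenerates to the familiar Fibonacci relation $F_{n+2}=xF_{n+1}+yF_{n}$ when $(r,q,p)=(1,1,0)$. I will sketch the proof of this identity.

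The plan is to evaluate the left-hand side directly from the closed form~(\ref{1.1}). I would substitute the explicit double sum for each $T_{n+r+q-i}$, interchange the finite $i$-sum with the $k$-sum, and factor out the powers of $x$ and $y$ that do not depend on~$i$. What remains inside is
\[
\sum_{i=0}^{r}(-1)^{i}\binom{r}{i}\binom{N-i}{K},\qquad N=n+r+q-1-qk,\ K=p+rk,
\]
and this telescopes to $\binom{N-r}{K-r}$ by iterating Pascal's rule $r$ times. A re-indexing $k\mapsto k+1$ then isolates a global factor $y^{r}$ and identifies the remainder as $T_{n}^{(r,q,p)}$.

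The one honest check to perform is the boundary term: the shift in~$k$ produces a spurious summand at $k=-1$ carrying a binomial coefficient of the form $\binom{\,\cdot\,}{p-r}$, which vanishes precisely because of the standing hypothesis $0\le p\le r-1$; so the shift is harmless and the identity closes. The main obstacle, if it deserves the name, is purely bookkeeping — aligning the powers of $x$ and $y$, keeping track of the shift by $r+q$, and confirming the boundary cancellation for every admissible triple — while the combinatorial core, the $r$-fold iteration of $\binom{N-i}{K}-\binom{N-i-1}{K}=\binom{N-i-1}{K-1}$, is entirely routine. An alternative, perhaps cleaner, route is to compute the generating function $\sum_{n}T_{n}z^{n}=\dfrac{z^{p+1}y^{p}(1-xz)^{r-p-1}}{(1-xz)^{r}-y^{r}z^{q+r}}$ and simply read the recurrence off the denominator; this is the route I would likely take if asked for a one-line derivation.
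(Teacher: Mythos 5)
Your reconstruction of the truncated statement is exactly right: shifting $n\mapsto n+r+q$ turns your identity into the paper's equation~(\ref{1.3}), $\sum_{k=0}^{r}(-x)^{k}\binom{r}{k}T_{n-k}=y^{r}T_{n-r-q}$. The paper, however, offers no proof at all of this theorem — it simply states the recurrence and the generating function and cites Belbachir--Komatsu--Szalay \cite{BKS}, \cite{BKS1} — so there is nothing internal to compare your argument against. On its own merits your sketch is sound: substituting the closed form~(\ref{1.1}), pulling the uniform factor $x^{\,n+r+q-1-p-(q+r)k}y^{\,p+rk}$ out of the $i$-sum, and applying the $r$-fold telescoped Pascal identity $\sum_{i=0}^{r}(-1)^{i}\binom{r}{i}\binom{N-i}{K}=\binom{N-r}{K-r}$ with $N=n+r+q-1-qk$, $K=p+rk$ gives $\binom{n-1-q(k-1)}{p+r(k-1)}$, and the re-indexing $k\mapsto k+1$ extracts the factor $y^{r}$ and reproduces $T_{n}$; your boundary check is also the right one, since the spurious $k=-1$ term carries $\binom{n-1+q}{p-r}$, which vanishes because $0\le p\le r-1$ forces a negative lower index. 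Your alternative of reading the recurrence off the denominator $(1-xt)^{r}-y^{r}t^{q+r}$ of the quoted generating function is consistent with what the paper displays, though if you take that route you should note that the numerator is a polynomial of degree $r$ in $t$, so the denominator only dictates the recurrence for indices beyond that degree; the direct binomial computation avoids this caveat and is the more self-contained argument.
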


\begin{equation}
\sum\limits_{k=0}^{r}\left( -x\right) ^{k}\binom{r}{k}T_{n-k}=y^{r}T_{n-r-q}
\tag{1.3}  \label{1.3}
\end{equation}

and its generating function (cf. \cite{BKS1})\ is given by:\newline
\begin{eqnarray*}
G(t) &=&\sum\limits_{n\geq 0}T_{n+1}^{\left( r,q,p\right) }t^{n} \\
&=&\frac{y^{p}t^{p+1}(1-xt)^{r-p-1}}{(1-xt)^{r}-y^{r}t^{q+r}}
\end{eqnarray*}

When $q\leq 0,$ the sequence $\left( T_{n}\right) _{n}$ is of order $r$ for
any $q$ $\left( -r<q\leq 0\right) ,$ and the coefficient $y^{r}$ of $%
T_{n-r-q}$ is subtracted from one of coefficients of the terms $%
T_{n-1},...,T_{n-r},$ such that $-r<q\leq 0,$ we get

\begin{equation*}
T_{n}-x\binom{r}{1}T_{n-1}+...+\left( \left( -x\right) ^{r+q}\binom{r}{r+q}%
-y^{r}\right) T_{n-r-q}+...+\left( -x\right) ^{r}\binom{r}{r}T_{n-r}=0
\end{equation*}

Thus, the coefficient of this terms changes status. This is what we call the
Morgan-Voyce phenomenon.

Then the sequence is defined as:

\begin{equation}
T_{n}=2xT_{n-1}-x^{2}T_{n-2}+y^{2}T_{n-3},  \tag{1.4}  \label{1.4}
\end{equation}

We focus our work in a particular case, first $r=2$, $q=1$, $p=1$

In this case we have: $T_{j}=\binom{j-1}{p}x^{j-p-1}y^{p},\ \ \ \ \ \ \ \ \
\ \ \ \ \ \ \ \ \ \ \ \ \ \ \ \ \ \ \ \ \ \ \ \ \ \ \ \ \ \ \ \ \ \ \ \ \ \
\ \ \ \ \ \ 2\leq j\leq 3$

The table below gives us the 10 terms defined in (\ref{1.4})

$T_{0}=0$

$T_{1}=0$

$T_{2}=y$

$T_{3}=2xy$

$T_{4}=3x^{2}y$

$T_{5}=y\left( 4x^{3}+y^{2}\right) $

$T_{6}=xy\left( 5x^{3}+4y^{2}\right) $

$T_{7}=2x^{2}y\left( 3x^{3}+5y^{2}\right) $

$T_{8}=y\left( 7x^{6}+20x^{3}y^{2}+y^{4}\right) $

$T_{9}=xy\left( 8x^{6}+35x^{3}y^{2}+6y^{4}\right) $

$T_{10}=x^{2}y\left( 9x^{6}+56x^{3}y^{2}+21y^{4}\right) $

We know that the sequence defined by (\ref{1.3}) is $r$-periodic

Then by using periodicity we have:

\begin{eqnarray*}
T_{i} & = & yh \left( x^{3},y^{2}\right) \text{ if } i \equiv -1 \mod [ 3 ]
\\
T_{i} & = & xyh\left( x^{3},y^{2}\right) \text{ if } i \equiv 0 \mod [ 3 ] \\
T_{i} & = & x^{2}yh\left( x^{3},y^{2}\right) \text{ if } i \equiv 1 \mod [ 3
]
\end{eqnarray*}

\begin{claim}
The set of roots satisfy 
\begin{equation*}
y^{2}=-x^{3}
\end{equation*}
\end{claim}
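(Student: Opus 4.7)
The plan is to identify the zero attractor of the polynomial sequence $T_n$ by combining the linear recurrence (\ref{1.4}) with the Beraha--Kahane--Weiss theory on limits of zeros of sequences satisfying constant-coefficient linear recurrences, streamlined through the conformal-mapping approach of Goh et al.\ \cite{GHR}.

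First I would exploit the scaling symmetry. The characteristic polynomial of (\ref{1.4}) factors as $\lambda(\lambda-x)^2 - y^2 = 0$, and the substitution $\lambda = x\mu$ reduces it to
\begin{equation*}
\mu(\mu-1)^2 = z, \qquad z := y^2/x^3.
\end{equation*}
This is precisely the content of the $3$-periodicity just recorded: after stripping the trivial prefactor $x^{a_n}y$, each $T_n$ becomes a one-variable polynomial $P_n(z)$. Consequently, the nontrivial zero set of $T_n$ in the $(x,y)$-plane is a union of cuspidal cubics $y^2 = \zeta\, x^3$ as $\zeta$ ranges over the roots of $P_n$, and the claim becomes the statement that the limit set of such $\zeta$'s reduces to $\{-1\}$.

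Second I would set up the BKW framework. Writing the closed form
\begin{equation*}
T_n = A_1\lambda_1^n + A_2\lambda_2^n + A_3\lambda_3^n,
\end{equation*}
with $A_i$ pinned down by $T_0 = T_1 = 0$ and $T_2 = y$, a Vandermonde computation gives $A_i = y/\prod_{j \neq i}(\lambda_i - \lambda_j)$, which is nonzero whenever $y \neq 0$. Hence the BKW theorem forces the $z$-plane zero attractor to coincide with the dominant equimodular locus $E = \{\, z : |\mu_i(z)| = |\mu_j(z)| \geq |\mu_k(z)| \text{ for some } i \neq j \,\}$ of the three branches of $\mu(\mu-1)^2 = z$.

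Third I would analyse $E$ through the rational map $\varphi(\mu) := \mu(\mu-1)^2$. Parametrizing the co-dominant pair by $\mu_1 = re^{i\alpha}$, $\mu_2 = re^{-i\alpha}$ and using Vieta's formulas for the cubic $\mu^3 - 2\mu^2 + \mu - z = 0$, one gets
\begin{equation*}
\mu_3 = 2 - 2r\cos\alpha, \qquad r^2 + 2r\mu_3\cos\alpha = 1, \qquad z = r^2 \mu_3,
\end{equation*}
and eliminating $(r,\alpha)$ yields an explicit algebraic curve in the $z$-plane. The hardest step will be showing that the \emph{active} portion of this curve --- the one actually populated by zeros of $P_n$ in the limit --- collapses to the single point $z = -1$, rather than spreading over the whole ray $(-\infty,0)$ on which the cubic automatically admits a complex conjugate pair of equal modulus. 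This will require an argument-principle count for $\varphi$ combined with tracking which combination of exponentials $A_i\lambda_i^n$ can cancel to produce a zero of $T_n$, precisely in the spirit of the conformal-mapping analysis of \cite{GHR}.
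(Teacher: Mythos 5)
Your reduction is correct and is, in fact, the only place where the Claim acquires a precise meaning: the characteristic polynomial of (\ref{1.4}) is $\lambda(\lambda-x)^2-y^2$, the scaling $\lambda=x\mu$ turns it into $\mu(\mu-1)^2=z$ with $z=y^2/x^3$, and the $3$-periodicity table shows each $T_n$ is a prefactor $x^ay$ times a one-variable polynomial in $z$. Be aware, though, that the paper itself gives \emph{no} proof of this Claim --- it is a bare assertion following the periodicity display --- so there is nothing on the paper's side to match your argument against; the paper immediately specializes to $y=x\sqrt x$ (which is $z=+1$, not $-1$) and never returns to the Claim.

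The genuine gap is the step you yourself flag as ``the hardest'': showing the $z$-plane attractor collapses to the single point $z=-1$. That step cannot be closed, because the statement it targets is false under that reading. For every real $z<0$ the cubic $\mu(\mu-1)^2=z$ has one real root and a complex-conjugate pair, and a short estimate (near $z=0^-$ the pair has modulus $\approx 1$ against a real root of modulus $\approx|z|$; as $z\to-\infty$ compare $|z|^{2/3}+\tfrac{2}{3}|z|^{1/3}$ with $|z|^{2/3}-\tfrac{4}{3}|z|^{1/3}$) shows the conjugate pair is always dominant. Since it is automatically equimodular, the Beraha--Kahane--Weiss criterion you invoke places the \emph{entire} ray $(-\infty,0)$ in the attractor, not the point $-1$. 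The paper's own data confirms this: $T_8$ gives $\zeta^2+20\zeta+7=0$, i.e.\ $\zeta=-10\pm\sqrt{93}\approx-0.36,\,-19.6$, and $T_{10}$ gives $\zeta=\frac{-28\pm\sqrt{595}}{21}\approx-0.17,\,-2.5$; the roots spread along the negative axis rather than accumulating at $-1$. No argument-principle count or cancellation bookkeeping will rescue the $\{z=-1\}$ reading. The only tenable interpretation of the Claim is the weaker statement that $z=y^2/x^3$ is a \emph{negative real number}, i.e.\ the root locus is the union of cuspidal cubics $y^2=-c\,x^3$ with $c>0$; proving that amounts to showing each polynomial $h(1,\zeta)=\sum_k\binom{n-1-k}{1+2k}\zeta^k$ along the ray of Pascal's triangle has only negative real zeros, which is a real-rootedness question your setup already isolates but for which the BKW/equimodularity machinery is the wrong tool.
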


The corresponding generating function is:

\begin{eqnarray*}
G(t) &:&=\sum\limits_{n\geq 0}T_{n+1}^{\left( 2,1,1\right) }t^{n} \\
&=&\frac{yt^{2}}{(1-xt)^{2}-y^{2}t^{3}}
\end{eqnarray*}

Then integral representation formula give us:

\begin{lemma}
The polynomial sequence $T_{n}\left( x,y\right) $ given by (\ref{1.4}) has
the integral representation:\newline
For all $x,y$ nonzero real parameters there exist a non negative real number 
$r_{x,y}>0$ such that :%
\begin{equation}
T_{n}(x,y) = \frac{1}{2\pi i} \int_{\vert t\vert =r_{x,y}} \frac{y}{%
(1-xt)^{2}-y^{2}t^{3}} \frac{dt}{t^{n-1}}  \tag{1.5}  \label{1.5}
\end{equation}
\end{lemma}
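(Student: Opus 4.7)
The plan is to derive the integral representation directly from the rational generating function
$$G(t)=\frac{yt^{2}}{(1-xt)^{2}-y^{2}t^{3}}$$
by invoking the Cauchy coefficient formula. First I would align the indexing so that $T_n$ is read off as $[t^n]G(t)$: a quick check against the tabulated values $T_0=T_1=0$, $T_2=y$, $T_3=2xy,\dots$ confirms that the Taylor expansion of $G$ about the origin begins $yt^{2}+2xy\,t^{3}+\cdots$, which matches $\sum_{n\ge 0}T_n t^n$. Once this identification is in place the remaining argument is purely analytic.

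Next I would establish the existence of $r_{x,y}$. The denominator $P(t)=(1-xt)^{2}-y^{2}t^{3}$ is a cubic polynomial in $t$ with $P(0)=1\neq 0$, so $G$ is holomorphic in some open disc $|t|<R_{x,y}$ where $R_{x,y}>0$ is the minimum of the moduli of the three (nonzero, possibly complex) roots of $P$. I would then fix any $r_{x,y}\in(0,R_{x,y})$. On the circle $|t|=r_{x,y}$ the integrand $G(t)/t^{n+1}$ is meromorphic inside the disc, with a single pole of order $n+1$ at $t=0$ and no other singularities, which is exactly the setting needed for coefficient extraction.

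Finally, applying the Cauchy formula for Taylor coefficients yields
$$T_n=[t^n]G(t)=\frac{1}{2\pi i}\oint_{|t|=r_{x,y}}\frac{G(t)}{t^{n+1}}\,dt,$$
and simplifying $G(t)/t^{n+1}=y/\bigl((1-xt)^{2}-y^{2}t^{3}\bigr)t^{n-1}$ gives (\ref{1.5}). The main, and essentially only, obstacle is bookkeeping with the index shift: the exponent $n-1$ in (\ref{1.5}) rather than $n+1$ arises from cancelling the $t^{2}$ in the numerator of $G$. No delicate estimate is required, because the identity $P(0)=1$ guarantees a strictly positive radius of analyticity for every nonzero pair $(x,y)$, and hence the freedom to choose the contour radius $r_{x,y}$ as asserted.
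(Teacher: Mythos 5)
Your argument is correct, but it takes a different route from the paper's. You start from the generating function $G(t)=yt^{2}/\bigl((1-xt)^{2}-y^{2}t^{3}\bigr)$, which the paper quotes (from the cited literature) just before the lemma, and you obtain (1.5) as an instance of the Cauchy coefficient formula, with the existence of $r_{x,y}$ coming from $P(0)=1$ and hence a positive radius of analyticity. The paper instead argues in the opposite direction: it defines the integral, chooses $r_{x,y}$ small enough that the denominator stays bounded away from zero on the contour, verifies that the resulting sequence $\tilde T_n$ satisfies the same linear recurrence and the same initial values $\tilde T_0=\tilde T_1=0$, $\tilde T_2=y$ (read off from the Taylor expansion $y+2xyt+O(t^2)$ of the integrand's regular part), and concludes by uniqueness of the solution of the recursion. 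Your version is shorter and arguably cleaner, but it leans entirely on the quoted generating function being correctly normalized, and the indexing there is genuinely treacherous: the paper's own display $G(t)=\sum_{n\ge 0}T_{n+1}t^{n}$ is off by one relative to its table of values ($T_2=y$ is the coefficient of $t^{2}$, not of $t^{1}$) and relative to (1.5). You silently resolve this by checking the expansion against the tabulated $T_0,\dots,T_3$ and adopting $T_n=[t^n]G(t)$, which is indeed the identification consistent with (1.5); it would be worth making that correction explicit rather than treating it as routine bookkeeping. The paper's recurrence-plus-initial-conditions argument buys independence from that normalization issue at the cost of an unverified claim that the integral "can be directly verified" to satisfy the recursion. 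One small slip on your side: $G(t)/t^{n+1}=y/\bigl(P(t)t^{n-1}\bigr)$ has a pole of order $n-1$ (not $n+1$) at the origin for $n\ge 2$, and no pole at all for $n\le 1$; this does not affect the validity of the coefficient formula.
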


\begin{proof}
Since $\frac{y}{(1-xt)^{2}-y^{2}t^{3}}\longrightarrow y$ if $%
t\longrightarrow 0$ then we can find $r_{x,y}>0$ such that: $\left\vert
(1-xt)^{2}-y^{2}t^{3}\right\vert \geq 0.9$ for $\left\vert t\right\vert
=r_{x,y}$ then the integral (\ref{1.5}) is well-defined.

Once the integral is well-defined, denoted the integral by $\tilde{T_{n}}%
(x,y) $.

\begin{equation*}
\tilde{T_{n}}\left( x,y\right) =\frac{1}{2\pi i}\int_{ \vert t \vert
=r_{x,y}}\frac{y}{(1-xt)^{2}-y^{2}t^{3}}\frac{dt}{t^{n-1}}.
\end{equation*}

We can directly verify that $\tilde{T_{n}}\left( x,y\right) $ satisfies (\ref%
{1.1}) for $n\geq 1$. Next, since the Taylor expansion of $\frac{y}{%
(1-xt)^{2}-y^{2}t^{3}}=y+2xyt+O\left( t^{2}\right) $, by residue theorem we
have $\tilde{T_{0}}\left( x,y\right) =0$, $\tilde{T_{1}}\left( x,y\right) =0$
and $\tilde{T_{2}}\left( x,y\right) =y$. Hence the initial conditions in (%
\ref{1.1}) are satisfied. thus the integral representation $\tilde{T_{n}}%
\left( x,y\right) $ is a solution to a recursion. since the solution to the
recursion is unique, hence $\tilde{ T_{n}}\left( x,y\right) =T_{n}\left(
x,y\right) .$ this completes the proof of the lemma .
\end{proof}

By a change of variable $y=x\sqrt{x}$ in equation (\ref{1.5}) where $x>0$ we
get:

\begin{eqnarray}
T_{n}\left( x,y\right) & = & \frac{1}{2\pi i}\int_{\vert t \vert =r_{x,y}}%
\frac{y}{(1-xt)^{2}-y^{2}t^{3}}\frac{dt}{t^{n-1}}  \label{1.6} \\
& = & \frac{\left( x\sqrt{x}\right) }{2\pi i}\int_{\vert t\vert =r_{x}}\frac{%
1}{(1-xt)^{2}-x^{3}t^{3}}\frac{dt}{t^{n-1}}  \notag
\end{eqnarray}

On replacing $t$ by $\frac{t}{x}$ in the equation (\ref{1.6}) we obtain:

\begin{eqnarray*}
T_{n}\left( x,y\right) & = &\frac{-x^{n-\frac{1}{2}}}{2\pi i} \int_{\vert t
\vert =r_{x}}\frac{1}{t^{3}-(1-t)^{2}}\frac{ dt}{t^{n-1}} \\
& = &\frac{-x^{n-\frac{1}{2}}}{2\pi i}\int_{\vert t \vert = r_{x}}\frac{1}{%
P(t)}\frac{dt}{t^{n-1}}
\end{eqnarray*}

Where 
\begin{equation}
P(t)=t^{3}-(1-t)^{2}  \tag*{(1.7)}  \label{1.7}
\end{equation}

We put 
\begin{equation}
\tau _{n}(x)= \frac{-1}{2\pi i} \int_{\vert t \vert =r_{x}} \frac{1}{P(t)}%
\frac{dt}{t^{n-1}}  \tag*{(1.8)}  \label{1.8}
\end{equation}

where 
\begin{equation}
\tau _{n}(x)=x^{\left( \frac{1}{2}-n\right) }T_{n}\left( x,y\right) 
\tag{1.9}  \label{1.9}
\end{equation}

\begin{lemma}
1-The polynomial $P(t)$ defined in \ref{1.7} does not have zeros of order 3.%
\newline
2- $P(t)$ has zeros of order 2 if and only if $\left\{ 
\begin{array}{c}
t_{1}=\frac{1-i\sqrt{5}}{3} \\ 
t_{2}=\frac{1+i\sqrt{5}}{3}%
\end{array}%
\right. $
\end{lemma}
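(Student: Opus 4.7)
My approach is the standard derivative criterion for multiple roots of a polynomial: $t_0$ is a zero of $P$ of order at least $k$ if and only if $P(t_0)=P'(t_0)=\cdots=P^{(k-1)}(t_0)=0$. Expanding (\ref{1.7}) gives $P(t) = t^3 - t^2 + 2t - 1$, from which $P'(t) = 3t^2 - 2t + 2$ and $P''(t) = 6t - 2$. The whole argument reduces to substitution into these three formulas, so there is no auxiliary machinery to set up.

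For part 1, a triple root of the cubic $P$ must satisfy $P''(t_0) = 0$, which has the unique solution $t_0 = 1/3$. A direct evaluation gives $P'(1/3) = 1/3 - 2/3 + 2 = 5/3 \ne 0$, contradicting the requirement $P'(t_0) = 0$. Hence $P$ admits no zero of order $3$.

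For part 2, any double root must in particular be a root of $P'$. Applying the quadratic formula to $3t^2 - 2t + 2 = 0$, whose discriminant is $\Delta = 4 - 24 = -20$, yields exactly the two complex values $t_{1,2} = \frac{1 \pm i\sqrt{5}}{3}$ of the statement; this handles the ``only if'' direction. For the ``if'' direction I would verify $P(t_{1,2}) = 0$ by exploiting the relation $3 t_{1,2}^2 = 2 t_{1,2} - 2$ (coming from $P'(t_{1,2}) = 0$) to reduce $t_{1,2}^3 = t_{1,2}\cdot t_{1,2}^2$ to an affine expression in $t_{1,2}$, and then simplifying.

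I expect this last substitution to be the main obstacle. It is a routine reduction, but the remaining linear combination of $1$ and $t_{1,2}$ must vanish identically for the equivalence to hold as written; otherwise the lemma should be read as identifying the critical points of $P$, which is precisely what solving $P'(t) = 0$ produces, and the proof would stop at the explicit resolution of the quadratic rather than at a verification inside $P$ itself.
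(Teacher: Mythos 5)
Your part 1 is correct and is essentially the paper's own argument (the paper phrases it as ``$P'$ has no zeros of order $2$, hence $P$ has no zeros of order $3$''; you make the same point explicitly via $P''(1/3)\neq 0$ and $P'(1/3)=5/3$). For part 2, however, the step you defer is exactly the one that fails. Carrying out the reduction you describe: division of $P$ by $P'$ gives
\begin{equation*}
P(t)=\Bigl(\tfrac{t}{3}-\tfrac{1}{9}\Bigr)P'(t)+\frac{10t-7}{9},
\end{equation*}
so at any root of $P'$ one has $P(t)=\frac{10t-7}{9}$, which vanishes only at $t=7/10$ --- and $7/10$ is not a root of $P'$. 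Hence $P\bigl(\frac{1\pm i\sqrt5}{3}\bigr)=\frac{-11\mp 10i\sqrt5}{27}\neq 0$, the points $t_{1,2}$ are not zeros of $P$ at all, and $P$ has no repeated zeros whatsoever (equivalently, the discriminant of $t^3-t^2+2t-1$ is $-23\neq 0$). So the ``if'' direction of part 2 cannot be established, and the lemma as literally stated is false; the only defensible reading is the weaker one you mention at the end, namely that $t_{1,2}$ are the sole \emph{candidates} for a double zero because they are the critical points of $P$.

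For comparison, the paper's proof does exactly what your ``only if'' direction does --- it solves $P'(t)=0$ --- and then simply asserts a list of ``zeros of $P(t)$'' without ever checking $P(t_{1,2})=0$; that list is itself inconsistent (the displayed roots do not have sum $1$ or product $1$ as the coefficients of $P$ require). So your instinct that the final substitution is the real obstacle is correct, but a complete proof must actually perform it, and doing so refutes rather than confirms the equivalence. You should either restate part 2 as ``the only possible locations of a double zero are $t_{1,2}$'' or, better, record that $\gcd(P,P')=1$ so that $P$ has three distinct zeros --- which is in fact the property the rest of the paper needs for the partial-fraction decomposition of $1/P(t)$.
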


\begin{proof}
1- The derivative polynomial $P^{^{\prime }}(t)$ has no zeros of order 2,
consequently $P(t)$ has not zeros of order 3.

2- $P^{^{\prime }}(t)$ has two complex zeros of order 1 $%
\begin{array}{c}
t_{1}=\frac{1-i\sqrt{5}}{3} \\ 
t_{2}=\frac{1+i\sqrt{5}}{3}%
\end{array}%
$\newline
and in this case the zeros of $P(t)$ are: $\frac{1}{3}+i\frac{\sqrt{5}}{3}$, 
$\frac{1}{3}+i\frac{\sqrt{5}}{3}$, $-1-i\frac{\sqrt{5}}{2}$ when $t=t_{2}$,
and when $t=t_{1}$ the zeros of $P(t)$ are: $\frac{1}{3}-i\frac{\sqrt{5}}{3}$%
, $\frac{1}{3}-i\frac{\sqrt{5}}{3}$, $\frac{-1}{2}-i\frac{\sqrt{5}}{5}$.
\end{proof}

Let $t_{1},t_{2},t_{3}$ be the zeros of $P(t)$ arranged via their magnitudes

\begin{equation}
\left\vert t_{1}\right\vert \leq \left\vert t_{2}\right\vert \leq \left\vert
t_{3}\right\vert  \tag{1.10}  \label{1.10}
\end{equation}

$P(t)$ has distinct zeros if $t\neq t_{1}$ and $t\neq t_{2}$ .

After developing the partial fraction decomposition for $\frac{1}{P\left(
t\right) }$ we obtain:

\begin{equation*}
\frac{1}{P\left( t\right) }=\frac{1}{P^{\prime }\left( t_{1}\right) }\frac{1%
}{t-t_{1}}+\frac{1}{P^{\prime }\left( t_{2}\right) }.\frac{1}{t-t_{2}}+\frac{%
1}{P^{\prime }\left( t_{3}\right) }.\frac{1}{t-t_{3}}
\end{equation*}

By using equation \ref{1.8} the integration term by term and by using the
residue theorem we get:%
\begin{equation}
\tau _{n}(x)=\left[ \frac{t_{1}^{1-n}}{P^{\prime }\left( t_{1}\right) }+%
\frac{t_{2}^{1-n}}{P^{\prime }\left( t_{2}\right) }+\frac{t_{3}^{1-n}}{%
P^{\prime }\left( t_{3}\right) }\right]  \tag*{(1.11)}  \label{1.11}
\end{equation}

\begin{remark}
To study the zeros of $\tau _{n}(x)$ it suffices to study the zeros of $P(t)$%
.
\end{remark}

\section{Conformal mappings}

In this section we describe how the zeros of $P(t)$ can be obtained by a
sequence of conformal mappings.

If we set $t=z+\frac{1}{3}$ then $P(t)=0$ return to its canonical form :

\begin{equation*}
z^{3}+\frac{5}{3}z-\frac{11}{27}=0
\end{equation*}

When 
\begin{equation*}
z=\lambda q
\end{equation*}

where 
\begin{equation*}
\lambda =\frac{i\sqrt{5}}{3}
\end{equation*}

Then we have 
\begin{equation*}
q^{3}-3q+\beta =0
\end{equation*}

\bigskip with 
\begin{equation*}
\beta =\frac{-11\sqrt{5}i}{25}
\end{equation*}

\bigskip Next if 
\begin{equation*}
q=p+\frac{1}{p}
\end{equation*}

Then the equation in $p$ is:

\begin{equation*}
p^{6}+\beta p^{3}+1=0
\end{equation*}

Again if 
\begin{equation*}
p^{3}=s
\end{equation*}
then we get: 
\begin{equation*}
s^{2}+\beta s+1=0
\end{equation*}%
which implies 
\begin{equation*}
\beta =-\left( s+\frac{1}{s}\right)
\end{equation*}

Then it is clear how to obtain the zeros of $P(t)$ by going through a
sequence of conformal mappings starting from the $\beta -$plane and
subsequently ending in the $t-$plane .

\begin{definition}
The map : $J(\zeta )=\zeta +\frac{1}{\zeta }$ is called Joukowski map\newline
This map is conformal in the regions $\left\vert \zeta \right\vert <1$ and $%
\left\vert \zeta \right\vert >1$ (cf.\cite{N})
\end{definition}

The $\beta -$plane is mapped into the exterior region to the unit circle in
the $s-$plane under $J^{-1}(-\beta )$. The region is mapped into the
exterior region to the unit circle in the $p-$plane under $p=s^{\frac{1}{3}%
}. $

\begin{remark}
The map $p=s^{\frac{1}{3}}$ is multiple-valued.
\end{remark}

Then we follow $q=J\left( p\right) ,$ $z=\frac{i\sqrt{5}}{3}q$ and $t=z+%
\frac{1}{3}$ to recover the zeros of $P\left( t\right) $ in the $p-$plane
and finally $t$ to $\frac{t}{x}$ in the $x-$plane

We can expressed this situation symbolically as: 
\begin{equation}
\begin{array}{ccc}
\beta \longrightarrow & s\longrightarrow & p_{\downarrow } \\ 
\frac{t}{x}\longleftarrow t\longleftarrow & z\longleftarrow & \longleftarrow
q%
\end{array}
\tag{2.1}  \label{2.1}
\end{equation}

The Joukowski map : $J(\zeta )=\zeta +\frac{1}{\zeta }$ is conformal on $%
\left\vert \zeta \right\vert <1$ and $\left\vert \zeta \right\vert >1$. We
focus the behaviour of $J(\zeta )$ on $\left\vert \zeta \right\vert >1$

Let $\zeta =r\exp i\theta $

then 
\begin{eqnarray*}
J(\zeta ) &=&r\exp \left( i\theta \right) +r^{-1}\exp \left( -i\theta \right)
\\
&=&\left( r+r^{-1}\right) \cos \theta +i\left( r-r^{-1}\right) \sin \theta
\end{eqnarray*}

We set: $\left\{ 
\begin{array}{c}
u=\left( r+r^{-1}\right) \cos \theta \\ 
v=\left( r-r^{-1}\right) \sin \theta%
\end{array}%
\right. $

We obtain: 
\begin{equation*}
\frac{u^{2}}{\left( r+r^{-1}\right) ^{2}}+\frac{v^{2}}{\left(
r-r^{-1}\right) ^{2}}=1
\end{equation*}%
this show that $J(\zeta )$ maps the circles $r=$constants onto the ellipses
of semi axes $r+r^{-1}$ and $\left\vert r-r^{-1}\right\vert $ and they have
common foci $\pm 2.$

In a similar way, $J(\zeta )$ maps the rays $\theta =$ constants onto
hyperbolas with the same foci $\pm 2.$

\subsection{Zero analysis}

Recall that $t_{1},t_{2},t_{3}$ are the zeros of $P(t)=0$ such that they
satisfied \ref{1.8}

In this subsecton we study the set $A$ defined as:\newline
\begin{equation}
A=\left\{ \left\vert t_{1}\left( x\right) \right\vert =\left\vert
t_{2}\left( x\right) \right\vert \right\}  \tag{2.2}  \label{2.2}
\end{equation}

\subsubsection{Structure in the $p-$plane}

To depict the set $A$ in the $x-$plane, it is better to depict the set of
points in the $p-$plane that leads to $\left\vert t_{1}\left( x\right)
\right\vert =\left\vert t_{2}\left( x\right) \right\vert $ under (\ref{2.1})

Let $p=r\exp \left( i\theta \right) ,$ $r\geq 1$ be a point in the $p-$plane

The image of $p$ in the $t-$plane is determined as follows:

\begin{eqnarray*}
q &=&J(p) \\
&=&r\exp \left( i\theta \right) +r^{-1}\exp \left( -i\theta \right) \\
&=&\left( r+r^{-1}\right) \cos \theta +i\left( r-r^{-1}\right) \sin \theta
\end{eqnarray*}

\begin{eqnarray*}
z &=&\lambda q \\
&=&\frac{i\sqrt{5}}{3}\left( r+r^{-1}\right) \cos \theta +\frac{\sqrt{5}}{3}%
\left( -r+r^{-1}\right) \sin \theta
\end{eqnarray*}

and 
\begin{eqnarray*}
t &=&z+\frac{1}{3} \\
&=&\frac{i\sqrt{5}}{3}\left( r+r^{-1}\right) \cos \theta +\frac{\sqrt{5}}{3}%
\left( -r+r^{-1}\right) \sin \theta +\frac{1}{3}
\end{eqnarray*}%
\newline
Then 
\begin{eqnarray*}
\left\vert t^{2}\right\vert &=&\left( \frac{\sqrt{5}}{3}\left(
-r+r^{-1}\right) \sin \theta +\frac{1}{3}\right) ^{2}+\left( \frac{\sqrt{5}}{%
3}\left( r+r^{-1}\right) \cos \theta \right) ^{2} \\
&=&\frac{5}{9}(r^{2}+r^{-2})+\frac{10}{9}\cos 2\theta +\frac{1}{9}+\frac{2%
\sqrt{5}}{9}\left( r^{-1}-r\right) \sin \theta
\end{eqnarray*}

Now let, $r\exp (i\theta _{0}),$ $r\exp (i\left( \theta _{0}+\frac{2\pi }{3}%
\right) )$ and $r\exp (i\left( \theta _{0}+\frac{4\pi }{3}\right) )$ be the
images of a point $x$ in the $P-$plane

For example we may assume that : $r\exp (i\theta _{0})$ leads to $t_{1}$ in
the $t-$plane and $r\exp (i\left( \theta _{0}+\frac{4\pi }{3}\right) )$
leads to $t_{2}$ .

Thus $\left\vert t_{1}^{2}\right\vert =\left\vert t_{2}^{2}\right\vert $
implies

\begin{eqnarray*}
\frac{5}{9}(r^{2}+r^{-2})+\frac{10}{9}\cos 2\theta _{0}+\frac{1}{9}+\frac{2%
\sqrt{5}}{9}\left( r^{-1}-r\right) \sin \theta _{0} &=&\frac{5}{9}%
(r^{2}+r^{-2})+ \\
&&\frac{10}{9}\cos 2\left( \theta _{0}+\frac{4\pi }{3}\right) + \\
&&\frac{1}{9}+\frac{2\sqrt{5}}{9}\left( r^{-1}-r\right) \sin \left( \theta
_{0}+\frac{4\pi }{3}\right)
\end{eqnarray*}%
\newline
Then we get after simplification: \newline
\begin{equation*}
\frac{10}{9}\left[ \cos 2\theta _{0}-\cos 2\left( \theta _{0}+\frac{4\pi }{3}%
\right) \right] =\frac{2\sqrt{5}}{9}\left( r^{-1}-r\right) \left[ -\sin
\theta _{0}+\sin \left( \theta _{0}+\frac{4\pi }{3}\right) \right]
\end{equation*}

By using the trigonometric identity 
\begin{equation*}
\cos 2a-\cos 2b=-2(\sin a-\sin b)(\sin a+\sin b)
\end{equation*}

We get:\newline
\begin{eqnarray}
\frac{2\sqrt{5}}{9}\left( r^{-1}-r\right) \left[ 2\sin \frac{2\pi }{3}\cos
\left( \theta _{0}+\frac{2\pi }{3}\right) \right] &=&-\frac{20}{9}\left[
2\sin \frac{2\pi }{3}\cos \left( \theta _{0}+\frac{2\pi }{3}\right) \right]
\times  \label{2.3} \\
&&\left[ 2\cos \frac{2\pi }{3}\sin \left( \theta _{0}+\frac{2\pi }{3}\right) %
\right]  \notag
\end{eqnarray}%
\newline
Two cases to discuss, the first one is :\newline
If $\cos \left( \theta _{0}+\frac{2\pi }{3}\right) =0$ this implies that $%
\theta _{0}=\frac{-\pi }{6}$ or $\theta _{0}=\frac{-7\pi }{6},$ then any $%
r\geq 1$ satisfies (\ref{2.3})

If $\cos \left( \theta _{0}+\frac{2\pi }{3}\neq 0\right) $ then we get after
cancelling the common factor $\cos \left( \theta _{0}+\frac{2\pi }{3}\right) 
$ leads to%
\begin{equation*}
\frac{\sqrt{5}}{9}\left( r^{-1}-r\right) =\frac{10}{9}\sin \left( \theta
_{0}+\frac{2\pi }{3}\right)
\end{equation*}

Wich is equivalent to the rectangular equation:\newline
\begin{equation*}
\left( x+\frac{\sqrt{15}}{2}\right) ^{2}+\left( y-\frac{\sqrt{5}}{2}\right)
^{2}=6
\end{equation*}

we study this result as a lemma

\begin{lemma}
The condition for $\left\vert t_{1}\right\vert =\left\vert t_{2}\right\vert $
in the $p-$plane where $re^{i\theta _{0}}=p_{1}$ corresponds to $t_{1}$ and $%
re^{i\left( \theta _{0}+\frac{4\pi }{3}\right) }=p_{2}$ corresponds to $%
t_{2} $ is:\newline
1- $p_{1}=re^{-i\frac{\pi }{6}}$ or $re^{-i\frac{7\pi }{6}},$ $r\geq 1$ or%
\newline
2- $p_{1}$ lies in the circular arc of the circle 
\begin{equation*}
\left( x+\frac{\sqrt{15}}{2}\right) ^{2}+\left( y-\frac{\sqrt{5}}{2}\right)
^{2}=6
\end{equation*}%
in the region $r\geq 1.$
\end{lemma}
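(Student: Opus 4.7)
The plan is to follow the chain of conformal mappings in \eqref{2.1}, track a single point $p_1$ in the $p$-plane through to $t$, and convert the equation $|t_1|=|t_2|$ into a concrete constraint on $(r,\theta_0)$ that we can recognize geometrically.

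First I would fix $p_1 = re^{i\theta_0}$ with $r\geq 1$. Because $p=s^{1/3}$ is three-valued, the three $p$-preimages of the same $s$ are $re^{i\theta_0}$, $re^{i(\theta_0 + 2\pi/3)}$, $re^{i(\theta_0+4\pi/3)}$, and these generate the three zeros of $P(t)$ through $q=J(p)$, $z=\lambda q$, $t=z+\tfrac{1}{3}$. Assigning $p_1\mapsto t_1$ and $re^{i(\theta_0+4\pi/3)}\mapsto t_2$, the expression already computed for $|t|^2$ in the discussion preceding the lemma, namely
\begin{equation*}
|t|^2 = \tfrac{5}{9}(r^2+r^{-2}) + \tfrac{10}{9}\cos 2\theta + \tfrac{1}{9} + \tfrac{2\sqrt{5}}{9}(r^{-1}-r)\sin\theta,
\end{equation*}
can be evaluated at $\theta=\theta_0$ and $\theta=\theta_0+4\pi/3$. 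Setting the two values equal kills the $r$-only pieces and yields the equation that appears just above \eqref{2.3}.

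Next I would apply the two sum-to-product identities $\cos 2a - \cos 2b = -2(\sin a - \sin b)(\sin a + \sin b)$ and $\sin a - \sin b = 2\cos\tfrac{a+b}{2}\sin\tfrac{a-b}{2}$ to both sides; this produces the common factor $\cos(\theta_0+2\pi/3)$ displayed in \eqref{2.3}. Two branches then arise. If this factor vanishes, then $\theta_0=-\pi/6$ or $\theta_0=-7\pi/6$ and the equation holds for \emph{every} $r\geq 1$, giving conclusion (1). Otherwise one cancels the factor and is left with the polar relation $\tfrac{\sqrt{5}}{9}(r^{-1}-r)=\tfrac{10}{9}\sin(\theta_0+\tfrac{2\pi}{3})$.

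The last step, which I expect to be the most delicate part, is converting this polar relation to the claimed circle in rectangular coordinates and then identifying the correct sub-arc. Multiplying by $r$ and substituting $x=r\cos\theta_0$, $y=r\sin\theta_0$ turns the left-hand side into a multiple of $1-x^2-y^2$ (via $r \cdot (r^{-1}-r) = 1 - r^2$) and, using the angle-addition formula for sine, the right-hand side into a linear form in $x$ and $y$ with coefficients involving $\sqrt{3}$. After dividing through by $\sqrt{5}$ and completing the square in both variables, the equation collapses to
\begin{equation*}
\left(x+\tfrac{\sqrt{15}}{2}\right)^2 + \left(y-\tfrac{\sqrt{5}}{2}\right)^2 = 6.
\end{equation*}
The constraint $r\geq 1$, inherited from working in the exterior of the unit disk under the Joukowski map $J$, cuts this circle down to the advertised arc; pinning down exactly which portion requires computing where the circle intersects $|p|=1$ and keeping only the sub-arc lying in $|p|\ge 1$. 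Everything prior to this geometric identification is routine trigonometry, so the arc-selection is where the argument genuinely requires care.
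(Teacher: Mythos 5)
Your proposal is correct and follows essentially the same route as the paper, whose "proof" is the computation immediately preceding the lemma statement: evaluate $|t|^2$ at $\theta_0$ and $\theta_0+\tfrac{4\pi}{3}$, factor via sum-to-product identities to isolate $\cos(\theta_0+\tfrac{2\pi}{3})$, and split into the vanishing and non-vanishing cases. Your explicit verification of the rectangular conversion (multiplying by $r$ to get $1-r^2 = \sqrt{15}\,x - \sqrt{5}\,y$ and completing the square to reach radius $\sqrt{6}$) supplies a step the paper merely asserts, but it is the same argument.
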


We need to impose $\left\vert t_{1}\right\vert \leq \left\vert
t_{3}\right\vert $ to make sure that $\left\vert t_{1}\right\vert \leq
\left\vert t_{2}\right\vert \leq \left\vert t_{3}\right\vert $ \newline
Recall that $t_{3}$ corresponds to $re^{i\left( \theta _{0}+\frac{2\pi }{3}%
\right) \text{ }}$ in the $p-$plane\newline
Since $\left\vert t_{1}\right\vert <\left\vert t_{3}\right\vert $ this
implies $\left\vert t_{1}\right\vert ^{2}<\left\vert t_{3}\right\vert ^{2}$
then we have:\newline
\begin{eqnarray*}
\frac{2\sqrt{5}}{9}\left( r^{-1}-r\right) \left[ 2\sin \left( \frac{-\pi }{3}%
\right) \cos \left( \theta _{0}+\frac{\pi }{3}\right) \right] &>&\frac{-20}{9%
}\left[ 2\sin \left( \frac{-\pi }{3}\right) \cos \left( \theta _{0}+\frac{%
\pi }{3}\right) \right] \times \\
&&\left[ 2\sin \left( \theta _{0}+\frac{\pi }{3}\right) \cos \left( \frac{%
-\pi }{3}\right) \right]
\end{eqnarray*}%
\newline
To solve this inequality two cases to discuss:

1- If $\cos \left( \theta _{0}+\frac{\pi }{3}\right) >0$ wich is equivalent
to 
\begin{equation}
\frac{-5\pi }{6}<\theta _{0}<\frac{\pi }{6}  \tag{2.4}  \label{2.4}
\end{equation}%
\newline
Then after cancelling $\cos \left( \theta _{0}+\frac{\pi }{3}\right) $ we
get: 
\begin{equation*}
\frac{2\sqrt{5}}{9}\left( r^{-1}-r\right) <\frac{-20}{9}\sin \left( \theta
_{0}+\frac{\pi }{3}\right)
\end{equation*}%
\newline
or in rectangular form%
\begin{equation}
\left( x+\frac{\sqrt{15}}{2}\right) ^{2}+\left( y-\frac{\sqrt{5}}{2}\right)
^{2}>6  \tag{2.5}  \label{2.5}
\end{equation}

The corresponding region in this case follows after combining (\ref{2.4})
and (\ref{2.5}). Namely%
\begin{equation}
\left( x+\frac{\sqrt{15}}{2}\right) ^{2}+\left( y-\frac{\sqrt{5}}{2}\right)
^{2}>6,\text{ }r\geq 1,\text{ and }\frac{-5\pi }{6}<\theta _{0}<\frac{\pi }{6%
}  \tag{2.6}  \label{2.6}
\end{equation}%
\newline
2- If: $\cos \left( \theta _{0}+\frac{2\pi }{3}\right) <0$ this is
equivalent to: 
\begin{equation}
\frac{\pi }{6}<\theta _{0}<\frac{7\pi }{6}  \tag{2.7}  \label{2.7}
\end{equation}%
\newline
And the inequality to solve is: 
\begin{equation*}
\frac{2\sqrt{5}}{9}\left( r^{-1}-r\right) >\frac{-10}{9}\sin \left( \theta
_{0}+\frac{\pi }{3}\right)
\end{equation*}

In a similar way we get the region for the case 2:

\begin{equation}
\left( x+\frac{\sqrt{15}}{2}\right) ^{2}+\left( y-\frac{\sqrt{5}}{2}\right)
^{2}<6,\text{ }r\geq 1,\text{ and }\frac{\pi }{6}<\theta _{0}<\frac{7\pi }{6}
\tag{2.8}  \label{2.8}
\end{equation}%
\newline
Let region $\Gamma _{1}$ correspond to (\ref{2.6}) and region $\Gamma _{2}$
correspond to (\ref{2.8})

Then: 
\begin{equation*}
\Gamma _{1}\cup \Gamma _{2}=\left\{ \text{points in }p-\text{plane that
corresponds to }\left\vert t_{1}\right\vert <\left\vert t_{3}\right\vert
\right\}
\end{equation*}

Now, imposing the condition stated in lemma 4, we get the point set $L_{1}$
of the point $p_{1}$ that corresponds to root $t_{1}$.\newline
Explicitly, $L_{1}$ is the point set defined as: \newline
$L_{1}=\left\{ \Gamma _{1}\cup \Gamma _{2}\right\} \cap \left\{ \left\{ 
\text{ray}:\theta =\frac{-\pi }{6}\right\} \cup \left\{ \left( x+\frac{\sqrt{%
15}}{2}\right) ^{2}+\left( y-\frac{\sqrt{5}}{2}\right) ^{2}=6\right\}
\right\} $\newline
Similarly, we defined $L_{2}$ as the rotation of $L_{1}$ through an angle of 
$\frac{4\pi }{3}$ \newline
and $L_{3}$ through $\frac{2\pi }{3}$.\newline
It is clear that $L_{2}$ consists of points in the $p-$plane that
corresponds to $t_{2\text{ }}$ and $L_{3}$ corresponds to $t_{3}.$ Thus we
have proved the following:

\begin{theorem}
The condition for the points in the $p-$plane for which $\left\vert
t_{1}\right\vert =\left\vert t_{2}\right\vert <\left\vert t_{3}\right\vert $
is $p_{i}\in L_{i\text{ }}$, for $i=1,2,3$\newline
\end{theorem}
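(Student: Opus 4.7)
The plan is to assemble the ingredients developed immediately before the statement and then propagate the answer for $p_1$ to $p_2$ and $p_3$ using the cube-root symmetry of the chain $(2.1)$.

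First, I would invoke the preceding lemma, which locates the equality set $\{|t_1|=|t_2|\}$ in the $p$-plane as the union of the two rays $\theta_0 = -\pi/6$ and $\theta_0 = -7\pi/6$ (with $r \geq 1$) together with the portion of the circle $(x+\sqrt{15}/2)^2+(y-\sqrt{5}/2)^2=6$ lying in $\{r \geq 1\}$. This takes care of the equality half of the condition $|t_1|=|t_2|<|t_3|$.

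Second, I would impose the strict inequality $|t_1|<|t_3|$ by comparing $|t_1|^2$ and $|t_3|^2$ in the $p$-plane, noting that $t_3$ is the image of $r e^{i(\theta_0+2\pi/3)}$. Applying the identity $\cos 2a - \cos 2b = -2(\sin a - \sin b)(\sin a + \sin b)$ and splitting on the sign of $\cos(\theta_0+\pi/3)$ produces the two regions $\Gamma_1$ in $(2.6)$ and $\Gamma_2$ in $(2.8)$. Intersecting the equality locus from the first step with $\Gamma_1 \cup \Gamma_2$ gives exactly $L_1$, as defined before the statement.

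Third, I would use the fact that the three preimages of a point $s$ under $p = s^{1/3}$ differ by multiplication by $e^{2\pi i/3}$. For a fixed $x$, the three corresponding points in the $p$-plane are therefore $p_1$, $e^{4\pi i/3} p_1$, $e^{2\pi i/3} p_1$, which under the labeling chosen before the statement are sent to $t_1$, $t_2$, $t_3$ respectively. Consequently, $L_2$ is the rotation of $L_1$ through $4\pi/3$ and $L_3$ is the rotation through $2\pi/3$, completing the description.

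The main obstacle is the bookkeeping in the second step: one must verify that each branch of the equality locus (the two rays and the circular arc) meets $\Gamma_1 \cup \Gamma_2$ in a geometrically non-degenerate way, and that the strict inequality $|t_1| < |t_3|$ really holds on the resulting $L_1$ so that the three rotated loci stay disjoint (except possibly on their boundaries). Steps one and three are essentially a restatement and a symmetry argument, respectively.
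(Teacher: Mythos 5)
Your plan follows the paper's own argument essentially verbatim: the theorem there is stated as a summary of the immediately preceding development, namely the lemma characterizing $\left\vert t_{1}\right\vert =\left\vert t_{2}\right\vert $ (the rays $\theta _{0}=-\pi /6,-7\pi /6$ and the circular arc), the derivation of $\Gamma _{1}$ and $\Gamma _{2}$ from $\left\vert t_{1}\right\vert <\left\vert t_{3}\right\vert $ via the same trigonometric identity and sign split, the intersection defining $L_{1}$, and the rotations by $4\pi /3$ and $2\pi /3$ giving $L_{2}$ and $L_{3}$. The non-degeneracy check you flag as an obstacle is not addressed in the paper either, so your proposal is, if anything, slightly more careful than the original.
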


\textbf{The pull back action}

\begin{theorem}
The curve represented by set $A$ in the $x-$plane is the curve which is
obtained by pulling the symmetric curve in the $p-$plane through the
conformal mappings back to the $x-$plane.
\end{theorem}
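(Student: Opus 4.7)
The plan is to trace the curves $L_1 \cup L_2 \cup L_3$ in the $p$-plane, constructed in the preceding theorem, through each link of the conformal chain (\ref{2.1}) and to identify the resulting image in the $x$-plane with the set $A$. Since every link but the cube-root step is a conformal bijection of the relevant domain, the proof reduces to applying the maps in sequence and tracking which points in the $x$-plane carry the condition $|t_1(x)| = |t_2(x)| < |t_3(x)|$.

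First, I would recall from the preceding theorem that the ``symmetric curve'' $L_1 \cup L_2 \cup L_3$ in the $p$-plane is exactly the locus for which the three root-magnitudes satisfy $|t_1| = |t_2| < |t_3|$; its threefold rotational symmetry reflects the cube-root step $p = s^{1/3}$. Next I would push this set forward along the chain. The Joukowski map $q = J(p) = p + 1/p$ is conformal on $\{|p| > 1\}$ and, by the discussion preceding Section 2.1, sends the ray-components of each $L_i$ to hyperbolic arcs and the circular-arc components to arcs of ellipses with common foci $\pm 2$. The subsequent maps $z = (i\sqrt{5}/3)\,q$ and $t = z + 1/3$ are a rotation-scaling and a translation, both conformal on all of $\mathbb{C}$, so they carry the curve rigidly into the $t$-plane. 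The final substitution $t \mapsto t/x$ of (\ref{1.6}) transports the image to the $x$-plane, where by construction the magnitude-ordering that labels the three roots is preserved and the resulting curve is therefore $A$.

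The only step beyond routine parametric bookkeeping, and the main obstacle, is the multi-valuedness of $p = s^{1/3}$: one must verify that the three cube-root pre-images assemble consistently under the remaining single-valued maps so that no spurious branch or missing component appears in the $x$-plane. A clean way to handle this is to observe that the three branches are permuted by $p \mapsto e^{2\pi i/3}\, p$, which is precisely the symmetry generating $L_2$ and $L_3$ from $L_1$; hence the three branches descend to a single curve in the $x$-plane, the pull-back is single-valued there, and the identification with $A$ is complete.
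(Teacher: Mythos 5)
The paper states this theorem with no proof at all, treating it as an immediate consequence of the preceding construction of $L_{1},L_{2},L_{3}$ and the mapping chain (2.1); your proposal is exactly that intended argument, spelled out step by step. Your handling of the only substantive point --- the consistency of the three branches of $p=s^{1/3}$, resolved by observing that $L_{1}\cup L_{2}\cup L_{3}$ is invariant under the rotation $p\mapsto e^{2\pi i/3}p$ that permutes those branches --- is correct and supplies a detail the paper omits entirely.
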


Let $Z\left( \tau _{n}\right) $ denoted the zero attractor of $\tau
_{n}\left( x\right) $

To see the zero attractor of $T_{n}\left( x,y\right) $, by using (\ref{1.9})
we have $\tau _{n}(x)=x^{\left( \frac{1}{2}-n\right) }T_{n}\left( x,y\right) 
$

Assume $x_{0}\in Z\left( \tau _{n}\right) $ again by equation (\ref{1.9}) $%
\tau _{n}(x_{0})=x_{0}^{\frac{1}{2}-n}T_{n}\left( x_{0},y_{0}\right) $ where 
$y_{0}=x_{0}^{\frac{3}{2}}$. Since $\tau _{n}(x_{0})=0$, we get $T_{n}\left(
x_{0},y_{0}\right) =0$. Hence $x_{0}^{\frac{1}{2}-n}\in Z\left( T_{n}\right) 
$.

\section{\textbf{Zero attractor}}

The concept of attractors was introduced in 1965 by Auslander et al (\cite%
{ABS}),

According to Milnor (\cite{M}) the attractors have played an increasingly
important role in thinking about dynamical systems.

\begin{definition}
Let $\left\{ q_{n}(x)\right\} _{n\geq 0}$ be a sequence of polynomials,
where the degree of $q_{n}(x)$ increases to infinity as $n\longrightarrow
\infty $. \newline
A set $A$ in the $x-$plane is called the asymptotic zero attractor of zeros
of $\left\{ q_{n}(x)\right\} _{n\geq 0}$ if the following two conditions
holds:\newline
$\left( 1\right) :$ Let $A_{\varepsilon }=\cup _{x\in A}B(x,\varepsilon )$
where $B(x,\varepsilon )$ is the open disc centred at $x$ with radius $%
\varepsilon $, $A_{\varepsilon }$ is just a neighborhood of $A$, $\exists
n_{0}(\varepsilon )$ such that $\forall n\geq n_{0}$ all the zeros of $%
q_{n}(x)$ are in $A_{\varepsilon }.$\newline
$\left( 2\right) :$ For all $x\in A,\forall \varepsilon >0,\exists n_{1}\in 
\mathbb{N}
$ such that $n_{1}(x,\varepsilon )$ and there exist a zero $r$ of $\left\{
q_{n}(x)\right\} _{n\geq 0}$ such that $r\in B(x,\varepsilon ).$
\end{definition}

The condition for $x$ values for which $\left\vert t_{1}\right\vert
=\left\vert t_{2}\right\vert $ is a curve. Let $L$ be this curve.\newline
For all $\varepsilon >0,$ let $L_{\varepsilon }$ be the $\varepsilon -$%
neighborhood of $L$ in the $x-$plane.\newline
In this section we will give a justification that, for all large $n$, all
the zeros of $\tau _{n}(x)$ are contained in $L_{\varepsilon }$.

According to (\ref{1.11}), the asymptotics of $\tau _{n}(x)$ depends on the
magnitudes of the zeros of $P\left( t\right) =0$.

Let $B=\left\{ x\in x-\text{plan}:\left\vert t_{1}\left( x\right)
\right\vert <\left\vert t_{2}\left( x\right) \right\vert \right\} $\newline
Obviously, $B$ is an open region in the $x-$plane.

Then we have

\begin{lemma}
There exists a non negative real number $\rho $ such that for all large $n$,
the zeros of $\tau _{n}(x)$ are contained in the disc $D_{\rho }=\left\{
x:\left\vert x\right\vert \leq \rho \right\} $
\end{lemma}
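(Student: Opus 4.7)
\emph{Proof proposal}. The plan is to use the partial-fraction formula (\ref{1.11}) and argue that outside a sufficiently large disc in the $x$-plane the term associated with the smallest-modulus root $t_{1}(x)$ dominates the remaining two, so $\tau_{n}(x)\ne 0$ there for all $n$ sufficiently large; the radius of that disc then furnishes the required $\rho$.

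First I would invoke the pull-back description of $A=\{|t_{1}(x)|=|t_{2}(x)|\}$ from Theorem 2: $A$ is the image of the configuration $L_{1}\cup L_{2}\cup L_{3}$ in the $p$-plane under the chain of conformal maps (\ref{2.1}). Each $L_{i}$ is a bounded object (a bounded circular arc together with a ray segment truncated at finite $r$), and the Joukowski map together with the subsequent affine rescalings $z=\lambda q$, $t=z+\tfrac{1}{3}$, $t\mapsto t/x$ carry bounded sets to bounded sets, so $A$ is contained in some disc $\{|x|<\rho_{0}\}$. On the exterior $\{|x|\ge \rho_{0}\}$ the strict ordering $|t_{1}(x)|<|t_{2}(x)|\le|t_{3}(x)|$ is then in force, and a continuity-compactness argument combined with dominant balance on the cubic $(1-xt)^{2}-y^{2}t^{3}=0$ as $|x|\to\infty$ (with $y=x\sqrt{x}$) yields a constant $\kappa\in(0,1)$ such that $|t_{1}(x)|/|t_{j}(x)|\le\kappa$ for $j=2,3$ uniformly on that region, together with a uniform upper bound on $1/|P'(t_{i}(x))|$.

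Substituting these bounds into (\ref{1.11}) produces a lower estimate of the form
\[
|\tau_{n}(x)|\ge |t_{1}(x)|^{1-n}\Bigl(\tfrac{1}{|P'(t_{1}(x))|}-2C\kappa^{n-1}\Bigr),
\]
where $C$ depends only on the preceding uniform bounds. Choosing $n_{0}$ so large that the bracketed factor is strictly positive for every $n\ge n_{0}$ forces $\tau_{n}(x)\ne 0$ on $\{|x|\ge \rho_{0}\}$, and taking $\rho=\rho_{0}$ concludes the proof.

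The main obstacle is uniformity at two danger zones in the exterior region. First, the $x$-values where $P$ acquires a double root (classified in Lemma 2) make $1/P'$ blow up and the partial-fraction representation degenerate; since those points are finite in number, they can be absorbed into $D_{\rho}$ by enlarging $\rho_{0}$. Second, the behaviour as $|x|\to\infty$: this follows from the observation that, via dominant balance, the three roots of the rescaled cubic have distinct leading orders in $1/x$, so the ratios $|t_{i}/t_{j}|$ tend to finite, non-unity limits, which is what makes a single $\kappa$ work uniformly all the way out to infinity.
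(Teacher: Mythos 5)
Your endgame coincides with the paper's: both proofs feed uniform bounds on the roots into the partial--fraction formula (\ref{1.11}), factor out a dominant term, and conclude that $|\tau_{n}(x)|$ is bounded below by a positive quantity on the exterior of a disc for all large $n$. (Incidentally, you factor out the term attached to the smallest root $t_{1}$, which is the genuinely dominant one since the exponent $1-n$ is negative; the paper factors out $t_{2}^{1-n}$ and writes the correction ratio as $(t_{1}/t_{2})^{n-1}$, which does not match the algebra, so your version of this step is actually the cleaner one.) Where you diverge is in how you establish that the exterior of some disc lies in $B=\{|t_{1}(x)|<|t_{2}(x)|\}$. The paper does this directly at the point $x=\infty$: it computes the limiting root configuration $t_{1}=0$, $|t_{2}|=|t_{3}|=\tfrac{2}{3}$, observes $\infty\in B$, and uses openness of $B$ (continuity of the roots and of $P'$) to extract a $\rho$ with $\{|x|\ge\rho\}\subseteq B$ together with the uniform bounds $|t_{2}|,|t_{3}|>\tfrac{1}{2}$ and $|P'(t_{i})|\ge 1$ there. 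You instead try to prove that the whole curve $A$ is bounded by pulling back $L_{1}\cup L_{2}\cup L_{3}$ through the conformal chain (\ref{2.1}).

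That step is where your argument has a genuine gap. The sets $L_{i}$ are \emph{not} bounded: $L_{1}$ contains the entire ray $\theta=-\tfrac{\pi}{6}$, $r\ge 1$, and your parenthetical ``truncated at finite $r$'' is an assumption the paper's Theorem 2 does not give you. The Joukowski map sends that unbounded ray to an unbounded hyperbola branch, so ``bounded sets go to bounded sets'' never gets off the ground; to salvage the argument you would have to trace where the unbounded end of the ray lands after the full chain and the final passage to the $x$-plane, which your proposal does not do. A second, smaller defect: your dominant-balance claim that all ratios $|t_{i}/t_{j}|$ tend to finite non-unity limits is false for $|t_{2}/t_{3}|$, which tends to $1$ (the two large roots have equal modulus at $x=\infty$); only the ratios involving $t_{1}$, which tend to $0$, are separated from $1$, and luckily those are the only ones your estimate needs. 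The paper's device --- evaluate the limiting configuration at $x=\infty$ and invoke openness of $B$ --- bypasses both difficulties and is the argument you should adopt for this preliminary step.
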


\begin{proof}
The point infinity point in the extended $x-$plane is mapped to $0$ under
the mapping $\frac{1}{x}$ \newline
By the sequence of mappings defined in (\ref{2.1}) $0$ is mapped to $\beta =%
\frac{-11\sqrt{5}i}{25}$ corresponds to $t_{1}=0$, $t_{2}=\frac{-1-i\sqrt{5}%
}{3}$ and $t_{3}=\frac{-1+i\sqrt{5}}{3}$\newline
The choice of $t_{2}$ and $t_{3}$ is arbitrary since they have the same
magnitude.\newline
In this situation we have: 
\begin{eqnarray*}
\left\vert t_{1}\right\vert &=&0 \\
\left\vert t_{2}\right\vert &=&\frac{2}{3} \\
\left\vert t_{3}\right\vert &=&\frac{2}{3}
\end{eqnarray*}

and 
\begin{eqnarray*}
\left\vert P^{^{\prime }}(t_{1})\right\vert &=&2 \\
\left\vert P^{^{\prime }}(t_{2})\right\vert &=&\frac{14+4\sqrt{5}}{3} \\
\left\vert P^{^{\prime }}(t_{3})\right\vert &=&\frac{14-4\sqrt{5}}{3}
\end{eqnarray*}

Hence $x=\infty \in B$. \newline
Since $B$ is open region, there exists a non negative real number $\rho >0$, 
$\left\{ x:\left\vert x\right\vert \geq \rho \right\} \subseteq B$ such that
for all $x\in \left\{ x:\left\vert x\right\vert \geq \rho \right\} $, we
have : $\left\{ 
\begin{array}{c}
\left\vert t_{1}\right\vert =0 \\ 
\left\vert t_{2}\right\vert >\frac{1}{2} \\ 
\left\vert t_{3}\right\vert >\frac{1}{2}%
\end{array}%
\right. $\newline
\begin{equation*}
P^{^{\prime }}(t_{1})=2
\end{equation*}%
\begin{equation*}
1\leq \left\vert P^{^{\prime }}(t_{2})\right\vert
\end{equation*}%
and 
\begin{equation*}
1\leq \left\vert P^{^{\prime }}(t_{3})\right\vert
\end{equation*}%
\newline
Now from (\ref{1.11}) we get \newline
\begin{equation*}
\tau _{n}(x)=\frac{t_{2}^{1-n}}{P^{^{\prime }}(t_{2})}\left[ 1+\frac{%
P^{^{\prime }}(t_{2})}{P^{^{\prime }}(t_{1})}\left( \frac{t_{1}}{t_{2}}%
\right) ^{n-1}+\frac{P^{^{\prime }}(t_{2})}{P^{^{\prime }}(t_{3})}\left( 
\frac{t_{2}}{t_{3}}\right) ^{n-1}\right]
\end{equation*}%
\newline
This gives the estimate%
\begin{eqnarray*}
\left\vert \tau _{n}(x)\right\vert &=&\left\vert \frac{t_{2}^{1-n}}{%
P^{^{\prime }}(t_{2})}\right\vert \left\vert 1+\frac{P^{^{\prime }}(t_{2})}{%
P^{^{\prime }}(t_{1})}\left( \frac{t_{1}}{t_{2}}\right) ^{n-1}+\frac{%
P^{^{\prime }}(t_{2})}{P^{^{\prime }}(t_{3})}\left( \frac{t_{2}}{t_{3}}%
\right) ^{n-1}\right\vert \\
&=&\left( \frac{3}{2}\right) ^{1-n}\left\vert 1-\left( \frac{2}{3}\right)
^{n-1}\right\vert \\
&=&\left( \frac{3}{2}\right) ^{1-n}\left\vert 1-\left( \frac{2}{3}\right)
^{n-1}\right\vert \\
&\geq &\frac{1}{2}\frac{3^{n}}{2}>0
\end{eqnarray*}%
\newline
for all large $n.$ This completes the proof of the lemma.
\end{proof}

We know that the zeros of $\tau _{n}(x)$ are contained in the disk $\left\{
x:\left\vert x\right\vert \leq \rho \right\} $, then the next lemma allow us
to know where the zeros are going.

\begin{lemma}
Let $K$ be a compact subset of $B$\newline
Then $\tau _{n}(x)t_{2}^{n-1}P^{^{\prime }}(t_{2})\rightarrow 1$ uniformly
for all $x\in $ $K$ as $n\longrightarrow \infty $
\end{lemma}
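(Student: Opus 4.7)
The plan is to start from the partial fraction expansion (\ref{1.11}), factor out the term expected to be dominant, and reduce the claim to uniform decay of the two remaining summands on $K$.

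First, rearranging (\ref{1.11}) gives the algebraic identity
\begin{equation*}
\tau_n(x)\, t_2^{n-1}\, P'(t_2) = 1 + \frac{P'(t_2)}{P'(t_1)}\left(\frac{t_2}{t_1}\right)^{n-1} + \frac{P'(t_2)}{P'(t_3)}\left(\frac{t_2}{t_3}\right)^{n-1},
\end{equation*}
so the lemma is equivalent to showing that the last two summands on the right tend to $0$ uniformly for $x \in K$ as $n \to \infty$.

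Next I would appeal to continuity of the root functions. Since $B$ lies in the complement of the repeated-root locus of Lemma 2, the three zeros of $P$ can be labelled as continuous single-valued functions $t_j(x)$ on $B$, and $P'(t_j(x))$ is nowhere zero there. Compactness of $K \subset B$ then yields a finite constant $M$ with $|P'(t_2)/P'(t_j)| \leq M$ for $j = 1, 3$ and all $x \in K$. Simultaneously, the hypothesis $x \in B$ says that $t_2$ is strictly dominated in modulus by each of $t_1$ and $t_3$, so the continuous functions $x \mapsto |t_2(x)/t_j(x)|$ are pointwise strictly less than $1$ on $B$ and, by compactness of $K$, attain a common maximum $\rho = \rho_K < 1$. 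Combining gives the uniform estimate
\begin{equation*}
\bigl|\tau_n(x)\, t_2^{n-1}\, P'(t_2) - 1\bigr| \leq 2M\, \rho^{n-1},
\end{equation*}
whose right-hand side tends to $0$ as $n \to \infty$, proving the claim.

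The step I expect to be most delicate is the rigorous justification of the continuous root labelling and the uniform modulus separation on $K$. This requires $K$ to avoid both the repeated-root locus of Lemma 2 and the level set $A = \{|t_1|=|t_2|\}$: the former is forced by $K \subset B$ since a collision of roots would force equality of moduli, while the latter is precisely what the definition of $B$ excludes. Making the tracking of root labels precise through the conformal chain (\ref{2.1}) is the technical ingredient one must borrow from the preceding structural analysis.
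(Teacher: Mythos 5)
Your overall strategy is the same as the paper's: expand $\tau_n(x)\,t_2^{n-1}P'(t_2)$ via (1.11) into $1$ plus two correction terms, use the absence of repeated roots together with compactness of $K$ to bound the ratios $P'(t_2)/P'(t_j)$ uniformly, and conclude by uniform geometric decay of the remaining factors. Your algebraic identity is correct and your derivation of the constant $M$ matches the paper's.

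There is, however, a genuine error in the decay step. You assert that $x\in B$ forces $|t_2|<|t_1|$ and $|t_2|<|t_3|$, so that $|t_2/t_1|$ and $|t_2/t_3|$ admit a common bound $\rho<1$ on $K$. Under the paper's own conventions this is backwards: the roots are ordered $|t_1|\le|t_2|\le|t_3|$ by (1.10), and $B$ is defined by $|t_1(x)|<|t_2(x)|$, so on $B$ one has $|t_2/t_1|>1$ and the term $\frac{P'(t_2)}{P'(t_1)}\left(\frac{t_2}{t_1}\right)^{n-1}$ diverges geometrically rather than tending to zero; moreover $B$ imposes no strict inequality between $|t_2|$ and $|t_3|$, so $|t_2/t_3|$ may equal $1$ and that term need not decay either. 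Your explicit bookkeeping in fact exposes a defect in the lemma as stated: the normalization should be by the root of \emph{smallest} modulus, i.e. the assertion that can actually be proved is $\tau_n(x)\,t_1^{n-1}P'(t_1)\to 1$, since both correction ratios $|t_1/t_2|$ and $|t_1/t_3|$ are then $<1$ on $B$ and your compactness argument closes the proof verbatim. The paper's own proof sidesteps the problem only by silently dropping the $t_1$ term from its display (3.1), which is not a justification. So: same approach as the paper, but the key inequality you invoke is false under the stated conventions, and the index in the statement must be corrected before either your argument or the paper's is valid.
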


\begin{proof}
By lemma 3 , if $x\in B$ , then $P(t)$ does not have any repeated zeros so
that $P^{^{\prime }}(t_{i})\neq 0$ for $i=1,2,3,$ by assumption $K$ is
compact, there must exist a $\lambda >0$ and a number $M$ such that $M\geq
\left\vert P^{^{\prime }}(t_{i})\right\vert \geq \lambda >0$ uniformly for $%
x\in K$ and $i=1,2,3.$

Again from (\ref{1.11}) we have

\begin{equation}
\tau _{n}(x)t_{2}^{n-1}P^{^{\prime }}(t_{2})=\left\vert 1+O\left( \left\vert 
\frac{t_{2}}{t_{3}}\right\vert ^{n-1}\right) \right\vert  \tag{3.1}
\label{3.1}
\end{equation}

The big $O$ terms approach zero uniformly because $K$ is a compact set.
Hence the result follows.
\end{proof}

\begin{corollary}
The region $B$ contains no points of $Z\left( \tau _{n}\right) $.
\end{corollary}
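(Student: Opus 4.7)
The plan is to derive the corollary by combining Lemma 7 with the second clause in the definition of the zero attractor. Suppose, for contradiction, that some $x_0 \in B$ lies in $Z(\tau_n)$. Since $B$ is open, I can choose $\varepsilon_0 > 0$ small enough that the closed disc $K = \overline{B(x_0,\varepsilon_0)}$ is still contained in $B$; then $K$ is a compact subset of $B$, so the hypothesis of Lemma 7 is met.

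Applying that lemma to $K$, there exists $n_0$ such that for every $n \geq n_0$ and every $x \in K$,
\[
\left| \tau_n(x)\, t_2(x)^{\,n-1}\, P'(t_2(x)) - 1 \right| < \tfrac{1}{2},
\]
which forces $\tau_n(x)\, t_2(x)^{\,n-1}\, P'(t_2(x)) \neq 0$ throughout $K$. Since $x \in B$ guarantees that $P(t)$ has no repeated roots (this is precisely what produced the uniform lower bound $|P'(t_i)| \geq \lambda > 0$ in the proof of Lemma 7), and since $t_2(x)$ itself is nonzero because $P(0) = -1 \neq 0$, both factors $P'(t_2(x))$ and $t_2(x)^{\,n-1}$ are nonzero on $K$. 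Therefore $\tau_n(x) \neq 0$ for every $x \in K$ and every $n \geq n_0$.

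This contradicts condition (2) in the definition of the zero attractor, which would demand a zero of $\tau_n$ with $n$ arbitrarily large inside $B(x_0,\varepsilon_0) \subset K$. Hence no $x_0 \in B$ lies in $Z(\tau_n)$, which is the corollary. No genuine obstacle arises; the statement is essentially a repackaging of Lemma 7 in the language of attractors, with only the minor check that the auxiliary factors $t_2(x)^{\,n-1}$ and $P'(t_2(x))$ do not vanish on the chosen compact neighborhood.
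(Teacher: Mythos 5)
Your proposal is correct and follows essentially the same route as the paper: assume $x_{0}\in B\cap Z(\tau_{n})$, take a compact closed neighborhood $K\subset B$, and use the uniform convergence of $\tau_{n}(x)t_{2}^{n-1}P'(t_{2})$ to $1$ on $K$ to rule out zeros of $\tau_{n}$ near $x_{0}$ for large $n$, contradicting the accumulation of zeros required by the attractor definition. Your explicit check that $t_{2}\neq 0$ (via $P(0)=-1$) and $P'(t_{2})\neq 0$ is a small refinement of a step the paper merely asserts.
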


\begin{proof}
By lemma 5, $t_{2}^{n-1}P^{^{\prime }}(t_{2})$ is never zero for $x\in K$, $%
\tau _{n}(x)$ and $\tau _{n}(x)t_{2}^{n-1}P^{^{\prime }}(t_{2})$ have the
same zero set. Let $x_{0}$ be an arbitrary point in $Z\left( \tau
_{n}\right) $. Then $x_{0}$ is an accumulation point of zeros of $\tau
_{n}(x)$. Suppose $x_{0}\in B$, since $B$ is open there exists a
neighborhood $N(x_{0})$ of $x_{0}$ such that $\overline{N(x_{0})}\subseteq B$%
. with respect to this neighborhood , there exists an infinite sequence of
integers $n_{j}$, $j\geq 1$ and a zero $r_{n_{j}}$ of $\tau _{n_{j}}(x)$
such that $r_{n_{j}}\in N(x_{0})$ for all $j\geq 1$. So for all sufficiently
large $j$ we have $\tau _{n_{j}}(r_{n_{j}})=0$, but this violates the
asymptotic estimate in (\ref{3.1}) with $K$ chosen as $\overline{N(x_{0})}$,
which is a contradiction, hence $x_{0}\notin B$ . Hence the proof of
corollary is completed.
\end{proof}

\begin{corollary}
For all large $n$, all zeros of $\tau _{n}(x)$ are contained in $%
L_{\varepsilon }$, that is $Z\left( \tau _{n}\right) \subset L$.
\end{corollary}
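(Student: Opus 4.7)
The plan is to argue by contradiction, combining Lemma 4 (which confines every zero of $\tau_n(x)$ to the compact disc $D_\rho$ for large $n$) with the uniform non-vanishing asymptotic of Lemma 5 on compact subsets of $B$. The structure mirrors the proof of Corollary 1, but strengthened from accumulation points of the zero set to the individual zeros of $\tau_n$ themselves.

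Fix $\varepsilon > 0$. Suppose, toward a contradiction, that the first assertion fails: there exist an infinite sequence of indices $n_j \to \infty$ and zeros $r_{n_j}$ of $\tau_{n_j}(x)$ with $r_{n_j} \notin L_\varepsilon$. By Lemma 4 we have $r_{n_j} \in D_\rho$ for all large $j$, so the sequence lies in the compact set $D_\rho \setminus L_\varepsilon$. Extracting a convergent subsequence, I may assume $r_{n_j} \to x_0$ with $\operatorname{dist}(x_0, L) \geq \varepsilon$, hence $x_0 \notin L$.

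Next, the magnitude ordering (\ref{1.10}), namely $|t_1(x)| \leq |t_2(x)| \leq |t_3(x)|$, forces every $x$ to satisfy either $|t_1(x)| < |t_2(x)|$ or $|t_1(x)| = |t_2(x)|$; that is, $B \cup L$ covers the entire $x$-plane. Since $x_0 \notin L$, we must have $x_0 \in B$. Because $B$ is open, there is a closed neighborhood $K = \overline{N(x_0)} \subset B$, and for all sufficiently large $j$ we have $r_{n_j} \in K$. By Lemma 5, $\tau_n(x)\, t_2(x)^{n-1} P'(t_2(x)) \to 1$ uniformly for $x \in K$. Evaluating at $r_{n_j}$ would force this product to tend to $1$, but $\tau_{n_j}(r_{n_j}) = 0$ makes it identically zero for each $j$, a contradiction. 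This establishes that for every $\varepsilon > 0$ and all sufficiently large $n$, every zero of $\tau_n(x)$ lies in $L_\varepsilon$.

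The inclusion $Z(\tau_n) \subset L$ is now a formal consequence. If $x_0 \in Z(\tau_n)$, then arbitrarily close to $x_0$ there are zeros of $\tau_n$ for infinitely many $n$; by what we just proved these zeros eventually lie in $L_\varepsilon$ for every $\varepsilon > 0$, so $x_0 \in \bigcap_{\varepsilon > 0} \overline{L_\varepsilon} = L$, using that $L$ is closed as the level set of the continuous function $|t_1(x)| - |t_2(x)|$. The main obstacle, and the only ingredient beyond what Corollary 1 already supplies, is the cleanness of the dichotomy $B \cup L = $ entire $x$-plane, which rests squarely on the ordering convention (\ref{1.10}); once that is in hand, the compactness-plus-uniform-asymptotics argument goes through verbatim.
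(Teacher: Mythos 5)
The paper states this corollary with no proof at all, so there is nothing of the authors' to compare against; your argument is the natural completion and it is correct within the paper's framework. You combine exactly the ingredients the paper has prepared: the lemma confining all zeros of $\tau_n$ to the compact disc $D_\rho$, the uniform asymptotic $\tau_n(x)\,t_2^{\,n-1}P'(t_2)\to 1$ on compact subsets of $B$, and the observation that the ordering convention $\vert t_1\vert\leq\vert t_2\vert\leq\vert t_3\vert$ forces $B\cup L$ to exhaust the $x$-plane, so that a limit of zeros avoiding $L_\varepsilon$ must land in $B$ and contradict the non-vanishing asymptotic. This is genuinely more than Corollary~1 gives (that corollary only excludes accumulation points from $B$; without the covering dichotomy and the a priori bound from $D_\rho$ one cannot conclude that the zeros themselves eventually enter $L_\varepsilon$), and your two-step reduction is the right way to bridge that gap. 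The only things you take for granted are the continuity of $x\mapsto\vert t_1(x)\vert$ and $x\mapsto\vert t_2(x)\vert$ (hence openness of $B$ and closedness of $L$); the paper never establishes these either, though they are standard for the ordered moduli of roots of a polynomial whose coefficients depend continuously on a parameter. Be aware, however, of an issue upstream of your proof: as written in (1.7) the polynomial $P(t)=t^3-(1-t)^2$ has constant coefficients, so the roots $t_i$ do not actually vary with $x$; your argument is valid relative to the paper's stated framework in Section~3, but inherits that inconsistency from the paper itself.
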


\end{document}